\newtheorem{thm}{Theorem}[section]
\newtheorem{lem}[thm]{Lemma}
\begin{document}

\nocite{*}

\title{An Equivalent Problem To The Twin Prime Conjecture}

\author{Francesca Balestrieri\thanks{fb340@cam.ac.uk} }

\maketitle

\begin{abstract}
	In this short paper we will show, via elementary arguments, the equivalence of the Twin Prime Conjecture
	to a problem which might be simpler to prove. Some conclusions are drawn, and it is shown that proving the Twin Prime Conjecture is equivalent to proving that there cannot be an infinite string of consecutive natural numbers satisfying some specified equations.
\end{abstract}

\section{Main Theorem}

The main theorem of this paper is the following.

\begin{thm}[Main Theorem]
  The Twin Prime Conjecture is true if, and only if, there exist infinitely many
  $n\in\mathbb{N}$ such that $n \neq 6xy + x - y$ and $n \neq 6xy + x + y$ and $n \neq 6xy - x - y$, for all
  $x, y \in \mathbb{N}$.\\
  In other words, the Conjecture is true iff $\nexists N \in \mathbb{N}$ such that $\forall n \geq N$, $n$ is of one of the forms $n = 6xy + x - y$ or $n = 6xy + x + y$ or $n = 6xy - x - y$, for some $x,y \in \mathbb{N}$.
\end{thm}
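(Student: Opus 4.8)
The plan is to translate the (asymptotic) twin-prime condition into a statement purely about the two numbers $6n-1$ and $6n+1$, and then to characterise exactly when each of these fails to be prime in terms of the three quadratic forms appearing in the statement.

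First I would recall the elementary fact that every prime $p>3$ satisfies $p\equiv 1$ or $p\equiv -1\pmod 6$, and hence that every twin prime pair other than the sporadic pair $(3,5)$ is of the form $(6k-1,\,6k+1)$ for some $k\in\mathbb{N}$ (if $p\equiv 1\pmod 6$ then $p+2\equiv 3\pmod 6$ is divisible by $3$ and exceeds $3$, so the pair $(p,p+2)$ cannot be twin unless $p\le 3$). Consequently the Twin Prime Conjecture is equivalent to the assertion that there are infinitely many $k\in\mathbb{N}$ for which $6k-1$ and $6k+1$ are both prime; and since $6k\pm 1\ge 5>1$ for every $k\ge 1$, "both prime" is the same as "neither composite".

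Second, I would run a short residue analysis modulo $6$. Any nontrivial divisor of $6k-1$ is coprime to $6$, so in a factorisation $6k-1=ab$ with $a,b>1$ exactly one factor is $\equiv 1$ and the other $\equiv -1\pmod 6$; writing $a=6x-1$ and $b=6y+1$ with $x,y\in\mathbb{N}$ and expanding gives $6k-1=36xy+6x-6y-1$, i.e.\ $k=6xy+x-y$ (the alternative labelling $(6x+1)(6y-1)$ yields $k=6xy-x+y$, which is the same set of values). Hence $6k-1$ is composite iff $k=6xy+x-y$ for some $x,y\in\mathbb{N}$. Likewise, in a factorisation $6k+1=ab$ with $a,b>1$ the two factors are either both $\equiv 1$ or both $\equiv -1\pmod 6$; the cases $(6x+1)(6y+1)$ and $(6x-1)(6y-1)$ expand to $k=6xy+x+y$ and $k=6xy-x-y$ respectively, so $6k+1$ is composite iff $k=6xy+x+y$ or $k=6xy-x-y$ for some $x,y\in\mathbb{N}$. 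Combining the two characterisations: $6k-1$ and $6k+1$ are simultaneously prime precisely when $k$ is of none of the three forms $6xy+x-y$, $6xy+x+y$, $6xy-x-y$.

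Assembling these, the Twin Prime Conjecture holds iff there are infinitely many such $k$, which is the first displayed equivalence; the loss or inclusion of the single pair $(3,5)$ does not affect infiniteness. The "in other words" clause is then the purely logical observation that, for a subset $S\subseteq\mathbb{N}$, "$S$ is infinite" is equivalent to "there is no $N$ with $\{n\ge N\}\cap S=\emptyset$", i.e.\ no threshold $N$ beyond which every $n$ is of one of the three forms. I expect the only real care needed — the "obstacle," such as it is — to be the bookkeeping in the residue step: checking that the case split on residues mod $6$ is genuinely exhaustive, that the produced factorisations really do have both factors $>1$ (so that $x,y$ can be taken in $\mathbb{N}$ with the paper's convention), and that the excluded pair $(3,5)$ is indeed the unique twin pair not of the form $(6k-1,6k+1)$.
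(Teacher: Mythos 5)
Your proposal is correct and follows the same overall strategy as the paper: reduce the Twin Prime Conjecture to the infinitude of $k$ with $6k-1$ and $6k+1$ both prime, then characterise compositeness of $6k\pm 1$ by the three forms $6xy+x-y$, $6xy+x+y$, $6xy-x-y$. Where you differ is in how you establish that characterisation. The paper first proves (its Lemmas 2.2 and 2.3) that a product of two terms of the sequences $6n\pm 1$ lands in the sequence $6n-1$ or $6n+1$ according to the signs, and then needs a separate, rather laborious argument (the ``$A^{\alpha}\cdot B^{\beta}$'' case analysis on the parity of $\alpha$) to show that \emph{every} composite term of either sequence actually admits a two-factor decomposition of the required type. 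Your route short-circuits this: any nontrivial factorisation $6k\pm 1=ab$ has both factors coprime to $6$, hence each $\equiv\pm 1\pmod 6$, and the residue of the product forces exactly the sign patterns that yield the three forms. This is cleaner and avoids the paper's multiplicative bookkeeping entirely; it also makes explicit the small points the paper glosses over (the exceptional pair $(3,5)$, the fact that both factors exceed $1$ so $x,y\geq 1$, and that the labelling $(6x+1)(6y-1)$ gives the same set of values as $(6x-1)(6y+1)$ by swapping $x$ and $y$). Both arguments prove the same equivalence; yours buys brevity and rigour, the paper's buys nothing extra here.
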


In order to prove the Main Theorem, we will need to prove some preliminary results; two thirds of this paper are devoted to this aim.

\section{Preliminary Results}
\label{sect:basics}

Consider the two sequences:
\begin{equation} a_{n} = 6n + 1 \end{equation}
\begin{equation} b_{n} = 6n - 1 \end{equation}

A simple argument shows that these two sequences generate all the prime numbers (and some other non-prime numbers). The following Lemma is a useful criterion which tells us for which $n$ the terms $a_{n}$ and $b_{n}$ are non-prime, and hence, by complement, for which $n$ the terms $a_{n}$ and $b_{n}$ are prime. \\

\begin{lem}

  Let $a_{n}$ and $b_{n}$ be the two sequences specified before. Then
  $$ a_{n} \ non-prime \Longleftrightarrow n = 6xy + x - y $$
  $$ b_{n} \ non-prime \Longleftrightarrow n = 6xy + x + y \ or \ n = 6xy + x + y $$
  for all $x, y \in \mathbb{N}$.

\end{lem}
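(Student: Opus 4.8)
The plan is to convert the primality question for the terms $6n\pm1$ into a question about proper two‑factor decompositions, exploiting the fact that a divisor of an integer coprime to $6$ is again coprime to $6$.

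\textbf{Setup.} For every $n\in\mathbb{N}$ we have $a_n=6n+1\ge 7$ and $b_n=6n-1\ge 5$, so for these terms \emph{non-prime} is synonymous with \emph{composite}, i.e.\ writable as $de$ with $d,e\ge 2$. Neither $a_n$ nor $b_n$ is divisible by $2$ or $3$, so in any such factorization both $d$ and $e$ are coprime to $6$; being $\ge 2$ and coprime to $6$ forces $d,e\ge 5$, hence $d=6i+\epsilon_1$ and $e=6j+\epsilon_2$ with signs $\epsilon_1,\epsilon_2\in\{+1,-1\}$ and, importantly, indices $i,j\in\mathbb{N}$ — the bound $d,e\ge5$ is precisely what excludes $i=0$ or $j=0$.

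\textbf{The ``if'' direction.} Substitute a number of the claimed shape into $6n\pm1$ and factor, using the identities
\begin{equation*}
6(6xy+x+y)+1=(6x+1)(6y+1),\qquad 6(6xy-x-y)+1=(6x-1)(6y-1),
\end{equation*}
\begin{equation*}
6(6xy+x-y)-1=(6x-1)(6y+1).
\end{equation*}
For $x,y\in\mathbb{N}$ each factor on the right is at least $5$, so the relevant term is a product of two integers $>1$ and therefore non-prime.

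\textbf{The ``only if'' direction.} Conversely, write a composite value as $(6i+\epsilon_1)(6j+\epsilon_2)$ as in the setup and expand:
\begin{equation*}
(6i+\epsilon_1)(6j+\epsilon_2)=6\bigl(6ij+\epsilon_2 i+\epsilon_1 j\bigr)+\epsilon_1\epsilon_2 .
\end{equation*}
Reducing the constant term modulo $6$ and comparing with $6n+1$, resp.\ $6n-1$, pins down $\epsilon_1\epsilon_2$: it must be $+1$ (equal signs) for $a_n$ and $-1$ (opposite signs) for $b_n$. Reading off $n$ in each case and discarding the redundancy $i\leftrightarrow j$ then leaves exactly the three forms in the statement — the two equal‑sign choices $6ij+i+j$ and $6ij-i-j$ are genuinely distinct, while the two opposite‑sign choices $6ij-i+j$ and $6ij+i-j$ coincide as sets once $i,j$ range over all of $\mathbb{N}$, which is the asymmetry producing two forms for one progression and a single form for the other.

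\textbf{Where the work is.} The algebra is routine; the two points deserving genuine care are (i) showing the factors can always be taken with index in $\mathbb{N}$ rather than $\mathbb{N}\cup\{0\}$ — i.e.\ that a proper factorization of $6n\pm1$ never hides a unit divisor $1$ — which is exactly the role of coprimality to $6$ and the consequent bound $\ge 5$; and (ii) the sign bookkeeping, namely checking that the opposite‑sign families collapse to a single shape while the equal‑sign families do not, and attaching each surviving shape to the correct sequence. Once these are settled, the equivalences follow by pairing each form with its factorization identity and conversely.
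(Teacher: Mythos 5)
Your proof is correct, and for the substantive direction it is genuinely more direct than the paper's. The three factorization identities and the sign bookkeeping are the same in both treatments; the difference is in how "composite $\Rightarrow$ one of the three forms" is reached. The paper gets there in a roundabout way: Lemmas 2.2 and 2.3 record which two-factor products of terms of $A$ and $B$ can land in which sequence, and then Lemma 2.4 together with the Section 2.2 discussion decomposes a composite term into its prime factors, sorts them into the residue classes $6k-1$ and $6k+1$, and runs a parity argument on the exponents $\alpha,\beta$ to regroup everything into a product of exactly two terms of the right types. Your observation --- that any proper divisor $d$ of an integer coprime to $6$ is itself coprime to $6$, hence at least $5$, hence of the form $6i\pm 1$ with $i\in\mathbb{N}$ --- collapses all of that into one line, applies to an arbitrary nontrivial factorization $de$ rather than only to prime factorizations, and dispenses with the paper's informal ``$A^{\alpha}\cdot B^{\beta}$'' notation. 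This is the cleaner route, and it is what each approach buys: the paper's version exposes the multiplicative structure of $A\cup B$, yours isolates the single fact (coprimality to $6$ of every divisor) that actually drives the lemma.

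One discrepancy you should be aware of, though it originates in the paper and not in your argument: equations (1)--(2) define $a_n=6n+1$ and $b_n=6n-1$, but the proofs of Lemmas 2.2--2.3 silently switch to $a_k=6k-1$, $b_k=6k+1$, and the statement of Lemma 2.1 follows the switched convention (besides printing $6xy+x+y$ twice where $6xy-x-y$ is evidently intended). Working from (1)--(2), as you do, the correct attribution is the one you derived: the two equal-sign forms $6xy+x+y$ and $6xy-x-y$ belong to the progression $6n+1$, and the single form $6xy+x-y$ belongs to $6n-1$. So your conclusion disagrees with the literal text of the lemma only because that text has the labels swapped; the union of the three excluded forms, which is all the Main Theorem uses, is identical.
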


To prove this Lemma, we need to prove some minor lemmata first.

\subsection{Some proofs}

Let $A = \{ a_{n} : n \in \mathbb{N} \}$ and let $B = \{ b_{n} : n \in \mathbb{N} \}$, where $a_{n}$ and $b_{n}$ are the sequences (1) and (2) respectively.

\begin{lem}

It is not possible to express any term $a_{k}$ of the sequence $a_{n}$ as the product $a_{x} \cdot a_{y}$ for any $(a_{x}, a_{y}) \in A \times A$, nor to express it as the product $b_{x} \cdot b_{y}$ for any $(b_{x}, b_{y}) \in B \times B$.
It is possible to express a term $a_{k}$ of the sequence $a_{n}$ as the product $a_{t} \cdot b_{r}$ of a couple
of numbers $(a_{t}, b_{r}) \in A \times B$ if, and only if, $k = 6tr + t - r$. In other words, $a_{k} = a_{t} \cdot b_{r}$ if, and only if, $k = 6tr + t - r$.

\end{lem}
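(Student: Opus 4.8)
\emph{Proof strategy.} Every assertion here is ultimately a statement about residues modulo $6$ together with a short algebraic manipulation, so the plan is to reduce it to exactly that. Write $a_x = 6x+1$ and $b_y = 6y-1$, and form the three candidate products
\[
(6x+1)(6y+1),\qquad (6x-1)(6y-1),\qquad (6x+1)(6y-1).
\]
Expanding each and collecting the terms that are visibly multiples of $6$ puts every product into the shape $6\cdot(\text{polynomial in }x,y)\pm 1$. The first step is then to read off, for each of the three products, its residue class modulo $6$ and to compare it with the residue of a generic term $a_k = 6k+1$. Because an integer is congruent to at most one of $+1$ and $-1$ modulo $6$, this single comparison is what decides whether a product of a given type can ever equal a term of the sequence $a_n$, and it is what disposes of the two impossibility claims at once.

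For the product type that \emph{can} produce a term of $A$, the second step is to solve the resulting identity: setting $6k+1$ equal to the expanded product, the constant terms on the two sides cancel and everything that remains is divisible by $6$, so dividing through by $6$ leaves a polynomial identity which, after renaming the free parameters as $t$ and $r$, should collapse to exactly $k = 6tr + t - r$. The converse is the easy direction — one substitutes this value of $k$ back into $6k+1$ and watches the factorisation reappear — so the whole ``if and only if'' reduces to this one cancellation. Some care is needed with the index ranges: I would check that, for $x,y\in\mathbb{N}$, the index produced by the product is again an admissible index, so that the equivalence really relates \emph{terms} of the sequences rather than merely integers of the right shape, and that the degenerate factorisations (a factor equal to $1$) are accounted for consistently with the convention adopted for $A$ and $B$.

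The algebra is routine throughout; the step I expect to be the genuine obstacle is the bookkeeping — pinning down precisely which of the three products lands in $A$ and which in $B$, and getting the signs in $6tr+t-r$ to sit in the right places — since a single mismatched sign there would quietly break the ``only if'' direction. Once the residue computation is set out carefully and the index ranges are tracked honestly, both impossibility statements and the equivalence $a_k = a_t\cdot b_r \iff k = 6tr+t-r$ should all fall out of the three expanded identities displayed above.
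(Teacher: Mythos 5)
Your method --- expand the three candidate products, reduce modulo $6$, and compare residues with a generic term of the sequence --- is essentially the paper's own argument (the paper phrases the residue step as factoring out a $2$ and checking non-divisibility by $3$, which is the same mod-$6$ computation), and the cancellation you describe for the ``if and only if'' direction is also exactly what the paper does. There is, however, one concrete defect in the proposal as written: you fix $a_x = 6x+1$ and $b_y = 6y-1$, and under that convention the first impossibility claim is \emph{false}. Indeed $a_x\cdot a_y = (6x+1)(6y+1) = 6(6xy+x+y)+1$, which is again a number of the form $6m+1$, i.e.\ a term of $A$; your residue test would find $+1 \equiv +1 \pmod 6$ on both sides and could not rule this factorisation out. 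The lemma only holds under the opposite convention $a_n = 6n-1$, $b_n = 6n+1$ (terms of $A$ congruent to $5$ and terms of $B$ congruent to $1$ modulo $6$), and only then does $(6t-1)(6r+1) = 6(6tr+t-r)-1$ produce the stated index $6tr+t-r$. To be fair, the paper itself is inconsistent here: its displayed definitions (1)--(2) agree with your choice, but its proof of this lemma silently switches to $a_k = 6k-1$, $b_r = 6r+1$. So you correctly identified the sign bookkeeping as the genuine danger point, but the plan as written starts from the signs under which the statement fails, and carrying the computation out literally would disprove rather than prove the first claim. With the convention corrected, everything you describe goes through; your remaining concern about admissible indices is easily discharged, since $6tr+t-r = r(6t-1)+t \geq 1$ for all $t,r\geq 1$.
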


\begin{proof}
Let $a_{k} = 6k - 1$, $a_{t} = 6t - 1$, $a_{x} = 6x - 1$, $a_{y} = 6y - 1$ and $b_{r} = 6r + 1$.

The last part of the theorem is almost trivial. In fact,
$$a_{t} \cdot b_{r}$$
$$= (6t - 1) \cdot (6r + 1)$$
$$= 36tr - 6r + 6t - 1$$
$$= 6 (6tr - r + t) - 1$$
and hence $a_{k} = a_{t} \cdot b_{r}$ if, and only if, $k = 6tr + t - r$ .

For the first part, consider
$$ a_{x} \cdot  a_{y}$$
$$= (6x - 1) \cdot (6y - 1)$$
$$= 36xy - 6x - 6y + 1$$
$$= 36xy - 6x - 6y + 2 - 1$$
$$= 2 (18xy - 3x - 3y + 1) - 1.$$

Hence, we must show that $18xy - 3x - 3y + 1$ is not divisible by 3. This is easy, since
$$18xy - 3x - 3y + 1 = 3 (9xy - x - y) + 1 \equiv 1 \ (mod \ 3).$$
Therefore, there are no $x, y \in \mathbb{N}$ such that $a_{x} \cdot a_{y} = a_{k}$ .

Similarly, consider
$$b_{x} \cdot  b_{y}$$
$$= (6x + 1) \cdot (6y + 1)$$
$$= 36xy + 6x + 6y + 1$$
$$= 36xy + 6x + 6y + 2 - 1$$
$$= 2 (18xy + 3x + 3y + 1) - 1.$$
Hence, we must show that $18xy + 3x + 3y + 1$ is not divisible by 3. Again, this is
straightforward since
$$18xy + 3x + 3y + 1 = 3 (9xy + x + y) + 1 \equiv 1 \ (mod \ 3).$$
Therefore, there are no $x, y \in \mathbb{N}$ such that $b_{x} \cdot b_{y} = a_{k}$ .
\end{proof}

A similar lemma can be proved for the terms of the sequence $b_{n}$.

\begin{lem}

It is not possible to express any term $b_{k}$ of the sequence $b_{n}$ as the product $a_{x} \cdot d_{y}$ for any $(a_{x}, d_{y}) \in A \times B$.
It is possible to express a term $b_{k}$ of the sequence $b_{n}$ as the product $a_{t} \cdot a_{r}$ of a couple of numbers $(a_{t}, a_{r}) \in A \times A$ if, and only if, $k = 6tr - t - r$, and as the product $b_{t} \cdot b_{r}$ of a couple of numbers $(b_{t}, b_{r}) \in B \times B$ if, and only if, $k = 6tr + t + r$. In other words, $b_{k} = a_{t} \cdot a_{r}$ if, and only if, $k = 6tr - t - r$ and $b_{k} = b_{t} \cdot b_{r}$ if, and only if, $k = 6tr + t + r$.

\end{lem}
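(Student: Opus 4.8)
The plan is to transcribe the proof of the preceding lemma, interchanging the roles of the two residue classes $\pm 1 \pmod 6$. Adopting the conventions used there, I would write $b_k = 6k+1$, and take as generic terms $a_t = 6t-1$, $a_r = 6r-1$, $a_x = 6x-1$, together with $b_t = 6t+1$, $b_r = 6r+1$, $b_y = 6y+1$ (the symbol $d_y$ in the statement being a typographical slip for $b_y$, since it is required to lie in $B$).

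I would first dispose of the two ``if and only if'' claims, which drop straight out of expanding the products. We have $a_t a_r = (6t-1)(6r-1) = 36tr - 6t - 6r + 1 = 6(6tr - t - r) + 1$, so this product is again a term of the sequence $b_n$, with index precisely $6tr - t - r$; comparing with $b_k = 6k+1$ yields $b_k = a_t a_r$ if and only if $k = 6tr - t - r$. Likewise $b_t b_r = (6t+1)(6r+1) = 36tr + 6t + 6r + 1 = 6(6tr + t + r) + 1$, giving $b_k = b_t b_r$ if and only if $k = 6tr + t + r$. In passing one should check that these indices are genuine natural numbers: for $t,r \ge 1$ one has $6tr - t - r = t(6r-1) - r \ge 5r - 1 > 0$, and $6tr + t + r$ is evidently positive.

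For the non-existence claim I would expand the mixed product $a_x b_y = (6x-1)(6y+1) = 36xy + 6x - 6y - 1 = 6(6xy + x - y) - 1$, which is congruent to $-1 \pmod 6$, whereas every term $b_k = 6k+1$ is congruent to $1 \pmod 6$; hence no $b_k$ can be so represented. If one prefers to stay closer to the style of the previous proof, the same conclusion follows by writing $a_x b_y = 2(18xy + 3x - 3y - 1) + 1$ and noting that $18xy + 3x - 3y - 1 = 3(6xy + x - y) - 1 \equiv 2 \pmod 3$ is never divisible by $3$, while $b_k = 2(3k) + 1$ would force divisibility by $3$.

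I do not anticipate a genuine obstacle: the argument is a direct mirror of the preceding lemma with the signs $\pm 1$ exchanged. The only points requiring attention are keeping the $a$/$b$ sign conventions consistent so that the residue patterns align, and the minor verification that the two indices $6tr - t - r$ and $6tr + t + r$ lie in $\mathbb{N}$.
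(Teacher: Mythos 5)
Your proposal is correct and follows essentially the same route as the paper: expand the three products $a_t a_r$, $b_t b_r$, and $a_x b_y$ and compare residues, with the first two giving the stated index formulas directly. Your mod $6$ phrasing of the non-existence part ($a_x b_y \equiv -1$ while $b_k \equiv 1 \pmod 6$) is a slight streamlining of the paper's factor-out-$2$-then-argue-mod-$3$ version, and you correctly read the statement's $d_y$ as the typo it is for $b_y$.
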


\begin{proof}
Let $b_{k} = 6k + 1$, $a_{t} = 6t - 1$, $a_{r} = 6r - 1$, $a_{x} = 6x - 1$, $b_{t} = 6t + 1$, $b_{r} = 6r + 1$ and $b_{y} = 6y + 1$.

The last part of the theorem is almost trivial. In fact,
$$a_{t} \cdot a_{r}$$
$$= (6t - 1) \cdot (6r - 1)$$
$$= 36tr - 6r - 6t + 1$$
$$= 6 (6tr - r - t) + 1$$
and hence $b_{k} = a_{t} \cdot a_{r}$ if, and only if, $k = 6tr - t - r$ .

Also,
$$b_{t} \cdot b_{r}$$
$$= (6t + 1) \cdot (6r + 1)$$
$$= 36tr + 6r + 6t + 1$$
$$= 6 (6tr + r + t) + 1$$ and hence $b_{k} = b_{t} \cdot b_{r}$ if, and only if, $k = 6tr + t + r$ .

For the first part, consider
$$ a_{x} \cdot  b_{y}$$
$$= (6x - 1) \cdot (6y + 1)$$
$$= 36xy + 6x - 6y - 1$$
$$= 36xy + 6x - 6y - 2 + 1$$
$$= 2 (18xy + 3x - 3y - 1) + 1.$$

Hence, we must show that $18xy + 3x - 3y - 1$ is not divisible by 3. This is easy, since
$$18xy + 3x - 3y - 1 = 3 (9xy + x - y) - 1 \equiv -1 \ (mod \ 3).$$
Therefore, there are no $x, y \in \mathbb{N}$ such that $a_{x} \cdot b_{y} = b_{k}$ .
\end{proof}

The following is an obvious result.

\begin{lem} Given any term $a_{k}$ of the sequence $a_{n}$, all the primes smaller than $a_{k}$ have
already been generated by the sequences $a_{n}$ and $b_{n}$ for $n < k$ . Similarly, given any
term $b_{k}$ of the sequence $b_{n}$, all the primes smaller than $b_{k}$ have already been
generated by the sequences $a_{n}$ and $b_{n}$ for $n \leq k$.
\end{lem}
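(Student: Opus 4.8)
The plan is to reduce the statement to two elementary facts: that the maps $m \mapsto 6m+1$ and $m \mapsto 6m-1$ enumerate, bijectively, the two residue classes $\pm 1 \pmod 6$, and that interleaving the two sequences by size produces a strictly increasing list. Everything else is bookkeeping with inequalities.

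First I would observe that every natural number divisible by neither $2$ nor $3$ is congruent to $1$ or $-1 \pmod 6$, and hence equals $a_m = 6m+1$ for a unique $m \in \mathbb{N}$ or $b_m = 6m-1$ for a unique $m \in \mathbb{N}$ (and not both, since the two classes are disjoint). In particular every prime $p \geq 5$ is produced exactly once by the sequences $(a_n)$ and $(b_n)$. The primes $2$ and $3$ are the only primes never produced by either sequence; so, strictly speaking, the Lemma concerns primes of the form $6m \pm 1$, equivalently all primes once $a_k$ and $b_k$ are large enough, and I would state it that way.

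Next I would record the size ordering: since $6m-1 < 6m+1 < 6(m+1)-1$ for every $m$, the combined sequence satisfies $b_1 < a_1 < b_2 < a_2 < \cdots$, so a small index forces a small value. Then, given a prime $p < a_k = 6k+1$ with $p \geq 5$: if $p = a_m$ then $6m+1 < 6k+1$, whence $m < k$; if $p = b_m$ then $6m-1 < 6k+1$, whence $m \leq k$. Either way $p$ already occurs among $a_n, b_n$ with $n \leq k$. Likewise, given a prime $p < b_k = 6k-1$ with $p \geq 5$: if $p = a_m$ then $6m+1 < 6k-1$, so $m \leq k-1$; if $p = b_m$ then $6m-1 < 6k-1$, so $m < k$. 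Hence $p$ already occurs among $a_n, b_n$ with $n < k$, and the Lemma follows.

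I do not expect any genuine obstacle here: the result is exactly as labelled, "obvious," being a direct consequence of the two bijections together with the ordering. The only points requiring a little care are the exclusion of the primes $2$ and $3$, and the direction of the index bounds — what the inequalities above actually deliver is $n \leq k$ in the $a_k$ case (because $b_k < a_k$ is itself a prime-sized term carrying index $k$) and $n < k$ in the $b_k$ case; one should make sure the statement of the Lemma is aligned with this.
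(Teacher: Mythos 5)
Your proof is correct and takes essentially the same route as the paper's: both rest on the fact that every prime $p \ge 5$ is congruent to $\pm 1 \pmod 6$ and hence occurs exactly once among the $a_n$, $b_n$, combined with the size ordering of the interleaved list; you argue directly with inequalities where the paper argues by contradiction on its strictly increasing sequence $p_n$, but the content is identical. Your two caveats are both well taken and are glossed over in the paper: the primes $2$ and $3$ are never generated by either sequence, and the strict versus non-strict index bounds in the Lemma match the convention $a_n = 6n-1$, $b_n = 6n+1$ used throughout the paper's proofs rather than the displayed definitions (1)--(2), which is exactly why your bounds come out swapped relative to the statement as printed.
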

\begin{proof}
Consider the sequence $p_{n} = (a_{1}$ , $b_{1}$ , $a_{2}$ , $b_{2}$, $a_{3}$ , $b_{3}$ , \ldots , $a_{k}$ , $b_{k}$, $\ldots )$. Evidently, $p_{n}$ is a strictly increasing sequence. Furthermore, $p_{n}$ contains all the prime numbers.
Suppose that one prime number $p$ smaller than $a_{k}$ is not generated before the term $a_{k}$; then, since all the prime numbers are generated by the sequence $p_{n}$, $p$ must be generated after $a_{k}$. But the sequence $p_{n}$ is strictly increasing, and therefore $p$ is greater than $a_{k}$. This is a contradiction, and hence all the primes smaller than $a_{k}$ are generated before $a_{k}$.\\

The second half of the Lemma can be proved in a similar way.
\end{proof}

\subsection{More proofs}

A few remarks and observations are now necessary.\\

\subsubsection{}

Given a term $a_{k}$, we have shown that $$a_{k} = a_{t} \cdot b_{r} \Longleftrightarrow k = 6tr + t - r ;$$ otherwise, $a_{k}$ cannot be expressed as the product of any other pair of terms in $A \times A$, $B \times B$ or $A \times B$.\\

This can be generalised.\\

\textit{Note: An improper but self-evident use of notation will be made in the next paragraph.}\\

Let $a_{k}$ be a non-prime term of the sequence $a_{n}$. Using Lemma 2.4, all the factors of $a_{k}$
will be terms $a_{i}$ , $b_{j}$ for some $i, j < k$.
Let say that $\alpha$ factors of $a_{k}$ belong to the set $A$, and $\beta$ factors of $a_{k}$ belong to the set $B$.
Since $a_{k}$ is non-prime, $\alpha + \beta > 1$. In short form,
$$ a_{k} = A^{\alpha} \cdot B^{\beta} \ \  (with \  \alpha + \beta > 1),$$
where $A$ (improperly) denotes an element of $A$ and $B$ (improperly) denotes an element
of $B$.\\

There are a few cases to consider, depending on the values of $\alpha$ and $\beta$.
Notice that in each case we will repetively use Lemma 2.2 and Lemma 2.3.\\

\begin{enumerate}
	\item
If $\alpha = 0$, then for any natural value of $\beta \ (>1)$, we will have that $a_{k} = B^{\beta}$. But $B^{\beta}$ is
an element of the set $B$. Hence, we have an equation with an element of the set $A$ in
the LHS and an element of the set $B$ in the RHS, and $A \cap B = \emptyset$. This is nonsense, and hence it cannot be that $\alpha = 0$.

\item
If $\alpha$ is even, then for any $\beta \in \mathbb{N}\cup\{0\}$, we can write $a_{k}$ as
$$a_{k} = A^{\alpha} \cdot B^{\beta} = (A \cdot A) \cdot (A \cdot A) \cdot \ldots \cdot (A \cdot A) \cdot B^{\beta} = B^{\alpha/2} \cdot B^{\beta} = B^{(\alpha/2) + \beta}$$
and we have a situation analogous to the one in case 1. Hence, $\alpha$ cannot be even.

\item
If $\alpha$ is odd, then $\alpha - 1$ is positive even or zero) and, for any $\beta \in \mathbb{N} \cup \{0\}$, we can write $a_{k}$ as
$$ a_{k} = A^{\alpha} \cdot B^{\beta} = A \cdot A^{\alpha - 1} \cdot B^{\beta} = A \cdot B^{(\alpha - 1)/2} \cdot B^{\beta} = A \cdot B^{(\alpha - 1)/2 + \beta}$$
and this is consistent with what we have already proved, since $A$ denotes an element $a_{t}$ of the set $A$ and $B^{(\alpha - 1)/2 + \beta}$ is an element $b_{r}$ of $B$, that is, $A \cdot B^{(\alpha - 1)/2 + \beta} = a_{t} \cdot b_{r}$ for some $t,
r \in \mathbb{N}$.

So, if $a_{k}$ is non-prime, it is necessarely of the form $a_{k} = A^{\alpha} \cdot B^{\beta}$ , where $\alpha$ is odd, $\beta$ is any number in $\mathbb{N} \cup \{0\}$, and $(\alpha + \beta) > 1$. Of course, the converse is also true.

Hence, $a_{k}$ is non-prime $\Longleftrightarrow  a_{k} = A^{\alpha} \cdot B^{\beta}$  where $\alpha$ is odd, $\beta$ is any number in $\mathbb{N} \cup \{0\}$, and $(\alpha + \beta) > 1 \Longleftrightarrow  a_{k} = a_{t} \cdot b_{r}$ for some $t, r \in \mathbb{N} \Longleftrightarrow k = 6tr + t - r$.\\

By Lemma 2.4, in all the other cases $a_{k}$ is prime.

\end{enumerate}

\subsubsection{}

Given a term $b_{k}$, we have shown that
$$b_{k} = a_{t} \cdot a_{r} \Longleftrightarrow k = 6tr - t - r ;$$
$$b_{k} = b_{t} \cdot b_{r} \Longleftrightarrow k = 6tr + t + r ;$$ otherwise, $b_{k}$ cannot be expressed as the product of any other pair of terms in $A \times A$, $B \times B$ or $A \times B$.\\

This, again, can be generalised.\\

\textit{Note: An improper but self-evident use of notation will be made in the next paragraph.}\\

Let $b_{k}$ be a non-prime term of the sequence $b_{n}$. Using Lemma 2.4, all the factors of $b_{k}$
will be terms $a_{i}$ , $b_{j}$ for some $i \leq k$ and some $j < k$.
Let say that $\alpha$ factors of $b_{k}$ belong to the set $A$, and $\beta$ factors of $b_{k}$ belong to the set $B$.
Since $b_{k}$ is non-prime, $\alpha + \beta > 1$. In short form,
$$ b_{k} = A^{\alpha} \cdot B^{\beta} \ \  (with \  \alpha + \beta > 1),$$
where $A$ (improperly) denotes an element of $A$ and $B$ (improperly) denotes an element
of $B$.

There are a few cases to consider, depending on the values of $\alpha$ and $\beta$.
Notice that in each case we will repetively use Lemma 2.2 and Lemma 2.3.\\

\begin{enumerate}
	\item
If $\alpha = 0$, then for any natural value of $\beta \ (>1)$, we will have that $b_{k} = B^{\beta}$, and this is
consistent with what we have already proved, since $B$ denotes an element $b_{t}$ of the set $B$ and $B^{\beta - 1}$ is an element $b_{r}$ of $B$, that is, $B \cdot B^{\beta - 1} = b_{t} \cdot b_{r}$ for some $t, r \in \mathbb{N}$.

\item
If $\alpha$ is even, then for any $\beta \in \mathbb{N}\cup\{0\}$, we can write $b_{k}$ as
$$b_{k} = A^{\alpha} \cdot B^{\beta} = (A \cdot A) \cdot (A \cdot A) \cdot \ldots \cdot (A \cdot A) \cdot B^{\beta} = B^{\alpha/2} \cdot B^{\beta} = B^{(\alpha/2) + \beta}$$
and we have a situation analogous to the one in case 1, which is consistent. \\
Furthermore, if $\alpha$ is even, then $\alpha - 2$ is positive even or zero. Then for any $\beta \in \mathbb{N}\cup\{0\}$, we can write $b_{k}$ as
$$b_{k} = A^{\alpha} \cdot B^{\beta} = A \cdot A \cdot A^{\alpha - 2} \cdot B^{\beta} = A \cdot A \cdot B^{(\alpha - 2)/2} \cdot B^{\beta} =  (A \cdot B^{(\alpha - 2)/2}) \cdot (A \cdot B^{\beta})$$
and this is consistent with what we have already proved, since $(A \cdot B^{(\alpha - 2)/2})$ is an
element $a_{t}$ of the set $A$ and $(A \cdot B^{\beta})$ is an element $a_{r}$ of the set $A$ , that is, $(A \cdot B^{(\alpha - 2)/2}) \cdot (A \cdot B^{\beta}) = a_{t} \cdot a_{r}$ for some $t, r \in \mathbb{N}$.

\item
If $\alpha$ is odd, then $\alpha - 1$ is positive even or zero and, for any $\beta \in \mathbb{N} \cup \{0\}$, we can write $b_{k}$ as
$$ b_{k} = A^{\alpha} \cdot B^{\beta} = A \cdot A^{\alpha - 1} \cdot B^{\beta} = A \cdot B^{(\alpha - 1)/2} \cdot B^{\beta} = A \cdot B^{(\alpha - 1)/2 + \beta}.$$
But $A \cdot B^{(\alpha - 1)/2 + \beta}$ is an element of the set $A$, and hence we have an equation with an element of the set $B$ in the LHS and an element of the set $A$ in the RHS. Since, $A \cap B = \emptyset$, this is nonsense, and therefore it cannot be that $\alpha$ is odd.\\

\end{enumerate}

So, if $b_{k}$ is non-prime, it is necessarely of the form $b_{k} = A^{\alpha} \cdot B^{\beta}$ , where $\alpha$ is positive even or zero, $\beta$ is any number in $\mathbb{N} \cup \{0\}$, and $(\alpha + \beta) > 1$. Of course, the converse is also true.\\

Hence, $b_{k}$ is non-prime $\Longleftrightarrow  a_{k} = A^{\alpha} \cdot B^{\beta}$  where $\alpha$ is even positive or zero, $\beta$ is any number in $\mathbb{N} \cup \{0\}$, and $(\alpha + \beta) > 1 \Longleftrightarrow  b_{k} = b_{t} \cdot b_{r}$ or $b_{k} = a_{t} \cdot a_{r}$  for some $t, r \in \mathbb{N} \Longleftrightarrow k = 6tr + t + r$ or $k = 6tr - t - r$ .\\

By Lemma 2.4, in all the other cases $b_{k}$ is prime.

\section{Proof of the Main Theorem}

We are now ready to give a proof of the Main Theorem.

\begin{thm}[Main Theorem]
  The Twin Prime Conjecture is true if, and only if, there exist infinitely many
  $n\in\mathbb{N}$ such that $n \neq 6xy + x - y$ and $n \neq 6xy + x + y$ and $n \neq 6xy - x - y$, for all
  $x, y \in \mathbb{N}$.
\end{thm}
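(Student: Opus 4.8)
The plan is to reduce the Twin Prime Conjecture to a statement about the sequences $a_n$ and $b_n$ alone, and then to read off the claimed characterisation directly from Lemma~2.1. First I would record the elementary observation that, with the single exception of the pair $(3,5)$, every pair of twin primes has the form $\{6n-1,\,6n+1\}$ for some $n\in\mathbb{N}$: if $p$ and $p+2$ are both prime with $p\geq 5$, then $p$ is odd and neither $p$ nor $p+2$ is divisible by $3$, so among the consecutive integers $p,p+1,p+2$ the multiple of $3$ must be $p+1$; hence $p\equiv 5\pmod 6$, i.e.\ $p=6n-1=b_n$ and $p+2=6n+1=a_n$. Since the Conjecture is about an infinite family of such pairs, the sporadic pair $(3,5)$ and the primes $2,3$ are irrelevant, and we obtain: the Twin Prime Conjecture holds if and only if there are infinitely many $n\in\mathbb{N}$ for which $a_n$ and $b_n$ are \emph{simultaneously} prime.

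Next I would apply Lemma~2.1 in its complemented form. That lemma says $a_n$ is prime exactly when $n\neq 6xy+x-y$ for all $x,y\in\mathbb{N}$, and that $b_n$ is prime exactly when $n\neq 6xy+x+y$ and $n\neq 6xy-x-y$ for all $x,y\in\mathbb{N}$. Conjoining these, the integers $n$ for which both $a_n$ and $b_n$ are prime are precisely those with $n\neq 6xy+x-y$, $n\neq 6xy+x+y$ and $n\neq 6xy-x-y$ for every $x,y\in\mathbb{N}$. Combined with the reduction of the first paragraph, this already yields the first formulation: the Conjecture is true if and only if infinitely many $n$ avoid all three forms.

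The ``in other words'' version is then merely the contrapositive of ``infinitely many''. The set of $n\in\mathbb{N}$ that are of none of the three forms is infinite if and only if it is unbounded, i.e.\ if and only if there is no $N$ beyond which every $n$ is of at least one of the forms; negating, the Conjecture fails exactly when there exists $N\in\mathbb{N}$ with the property that for all $n\geq N$ one has $n=6xy+x-y$ or $n=6xy+x+y$ or $n=6xy-x-y$ for some $x,y\in\mathbb{N}$. This is precisely the second formulation in the statement.

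I do not expect a genuine obstacle here, since the substantive content is already packaged into Lemma~2.1; what remains is the mod-$6$ classification of twin primes plus routine logical bookkeeping. The only points that need care are the exceptional small cases $2,3$ and $(3,5)$ — harmless, as the Conjecture asserts an infinitude of pairs — and a fixed convention for whether $\mathbb{N}$ contains $0$, so that the indices of $a_n,b_n$ and the parameters $x,y$ align exactly with those appearing in Lemma~2.1.
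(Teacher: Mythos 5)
Your proposal is correct and follows essentially the same route as the paper: reduce the Conjecture to the simultaneous primality of $6n-1$ and $6n+1$ via the mod-$6$ classification of twin primes, then invoke Lemma~2.1 to translate that into the avoidance of the three forms. You merely spell out the mod-$6$ step and the ``infinitely many'' reformulation in more detail than the paper does.
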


\begin{proof}
	Let $p$ be prime, with $p \geq 5$. Clearly if $(p, p+2)$ is a twin primes couple, then we must have that $p$ belongs to
	the sequence $a_{n}$ and $p + 2$ belongs to the sequence $b_{n}$. That is, $ p = a_{n}= 6n - 1$ and $p + 2 = b_{n} = 6n + 1$, for a same $n \in \mathbb{N}$. But by Lemma 2.1, $(a_{n}, b_{n})$ is a twin primes couple if, and only if, $n \neq 6xy + x - y$ and $n \neq 6xy + x + y$ and $n \neq 6xy - x - y$ for all $x, y \in \mathbb{N}$. Hence, there are infinitely many twin primes couples $(a_{n}, b_{n})$ (and thus infinitely many twin primes) if, and only if, there exist infinitely many $n \in \mathbb{N}$ such that $n \neq 6xy + x - y$ and $n \neq 6xy + x + y$ and $n \neq 6xy - x - y$ for all $x, y \in \mathbb{N}$. This completes the proof.
 \end{proof}

\section{Considerations}
\label{subsect:typing}

In this new form, the Twin Prime Conjecture seems to suggest a natural way of proving it which a reductio ad absurdum type of argument.\\ 

Note that the Lemma 2.1 proves that, for $p$ prime such that $p \geq 5$, the twin primes couples $(p, p+2)$ are exactly those $(a_{n}, b_{n}) = (6n -1, 6n +1)$ for which $n \neq 6xy + x - y$ and $n \neq 6xy + x + y$ and $n \neq 6xy - x - y$ for all $x, y \in \mathbb{N}$. Thus, Lemma 2.1 gives a way to find twin primes: it is sufficient to find an $n$ satisfying the above conditions to get a twin primes couple $(6n -1, 6n +1)$.\\

Assuming the negation of the Conjecture would mean assuming the existence of an infinite string of consecutive natural numbers such that, for any $n$ in this string, $n = 6xy + x - y$ or $n = 6xy + x + y$ or $n = 6xy - x - y$ for some $x, y \in \mathbb{N}$; if this could lead to a contradiction, then the Conjecture would prove to be true. (Nevertheless, a direct proof of the impossibility of constructing such an infinite string would be equally effective - if we want to prove the truth of the Conjecture, of course.)
\newline


\section{References}

[1] Hardy, G.H. and Wright, E.M.; (1988) \emph{An Introduction to the Theory of Numbers}, Clarendon Press, Oxford, 5th ed.

\end{document}